\newcommand{\coker}{{\operatorname{coker}}}
\newcommand{\U}{{\mathcal U}}
\newcommand{\0}{{\mathbf 0}}
\newcommand{\C}{{\mathbb C}}
\newcommand{\Z}{{\mathbb Z}}
\newcommand{\Q}{{\mathbb Q}}
\newcommand{\strat}{{\mathfrak S}}
\newcommand{\supp}{\operatorname{supp}}
\newcommand{\im}{\mathop{\rm im}\nolimits}
\newcommand{\Cdot}{\mathbf C^\bullet}
\newcommand{\Idot}{\mathbf I^\bullet}
\newcommand{\Pdot}{\mathbf P^\bullet}
\newcommand{\Qdot}{\mathbf Q^\bullet}
\newcommand{\Kdot}{\mathbf  K^\bullet}
\newcommand{\Mdot}{\mathbf  M^\bullet}
\newtheorem{defn0}{Definition}[section]
\newtheorem{prop0}[defn0]{Proposition}
\newtheorem{conj0}[defn0]{Conjecture}
\newtheorem{thm0}[defn0]{Theorem}
\newtheorem{lem0}[defn0]{Lemma}
\newtheorem{corollary0}[defn0]{Corollary}
\newtheorem{example0}[defn0]{Example}
\newtheorem{remark0}[defn0]{Remark}
\newtheorem{question0}[defn0]{Question}
\newenvironment{defn}{\begin{defn0}}{\end{defn0}}
\newenvironment{thm}{\begin{thm0}}{\end{thm0}}
\newenvironment{cor}{\begin{corollary0}}{\end{corollary0}}
\newenvironment{exm}{\begin{example0}\rm}{\end{example0}}
\newenvironment{rem}{\begin{remark0}\rm}{\end{remark0}}
\newcommand{\thmref}[1]{Theorem~\ref{#1}}
\newcommand{\exref}[1]{Example~\ref{#1}}
\title[Kernels, Images, and Cokernels in the Perverse Category]{A Note on Kernels, Images, and Cokernels in the Perverse Category}
\author{David B. Massey}
\address{Department of Mathematics\\
  Northeastern University\\
  Boston, Massachusetts 02115}
\email[D.~Massey]{d.massey@neu.edu}
\begin{document}

\begin{abstract}
We discuss the relationship between kernels, images and cokernels of morphisms between perverse sheaves and  induced maps on stalk cohomology.\end{abstract}

\maketitle

\thispagestyle{fancy}

\lhead{}
\chead{}
\rhead{ }

\lfoot{}
\cfoot{}
\rfoot{}

\section{Introduction}

General references for the derived category and perverse sheaves are \cite{kashsch}, \cite{dimcasheaves}, and \cite{inthom2}. As we are always considering the derived category, we follow the usual practice of omitting the ``R''s in front of right derived functors.

\smallskip

We fix a complex analytic space $X$ and a base ring, $R$, which is a commutative, regular, Noetherian ring, with finite Krull dimension (e.g., $\Z$, $\Q$, or $\C$).  We let $\operatorname{Perv}(X)$ denote the Abelian category of perverse sheaves of $R$-modules on $X$.

Suppose that $\Pdot$ and $\Qdot$ are objects in $\operatorname{Perv}(X)$, and $T$ is a morphism between them. We wish to look the kernels, images, and cokernels of $T$ in $\operatorname{Perv}(X)$ versus those of the induced maps on the stalk cohomology. The relationship between these -- or seeming lack thereof -- is well-known to experts.

\smallskip

Consider the following simple, but illustrative, example.

\smallskip

\begin{exm}\label{exm:fund} Let $X$ be the union of the coordinate axes in $\C^2$; then, the shifted constant sheaf $\mathbf R_X^\bullet[1]$ is perverse. Let $f$ denote the resolution of singularities map from two disjoint complex lines to $X$.  Then, $\mathbf I_X^\bullet:=f_*f^*\mathbf R_X^\bullet[1]$ is also perverse (it is, in fact, the intersection cohomology sheaf with constant coefficients on $X$).

The perverse sheaf $\mathbf I_X^\bullet$ is easy to describe; it is the direct sum of the extensions by zero of the shifted constant sheaves on each of the axes. In particular, the stalk cohomology at $x\in X$ of $\mathbf I_X^\bullet$ is zero outside of degree $-1$ and, in degree $-1$, is $R$ at $x\neq\0$ and is $R\oplus R$ at $x=\0$.

There is a natural morphism $T: \mathbf R_X^\bullet[1]\rightarrow f_*f^*\mathbf R_X^\bullet[1]=\mathbf I_X^\bullet$ which is easy to describe on the level of stalks. Note that, in what follows, the superscript $-1$ is for the degree, {\bf not} for the inverse map. For each $x\in X$, the map 
$$T_x^{-1}: H^{-1}(\mathbf R_X^\bullet[1])_x\rightarrow H^{-1}(\mathbf I_X^\bullet)_x$$
is the diagonal map, i.e., the identity for $x\neq\0$ and the diagonal map $R\rightarrow R\oplus R$ when $x=\0$.

Thus, in all degrees, for all $x\in X$, $\ker T^i_x=0$. However, we claim that $\ker T\neq 0$ in $\operatorname{Perv}(X)$.

\medskip

How do you see this?

\medskip

Consider the mapping cone $\Mdot$ of $T: \mathbf R_X^\bullet[1]\rightarrow \mathbf I_X^\bullet$ in the derived category, so that we have a distinguished triangle
$$
\mathbf R_X^\bullet[1]\xrightarrow{T} \mathbf I_X^\bullet\rightarrow \Mdot\xrightarrow{[1]} \mathbf R_X^\bullet[1].
$$

\smallskip

\noindent The complex $\Mdot$ is easy to describe; it is supported only at the origin, and has non-zero stalk cohomology only in degree $-1$, where it is the cokernel of the diagonal map $R\rightarrow R\oplus R$, i.e., is isomorphic to $R$.

The complex $\Mdot$ is not perverse; at an isolated point in the support of a perverse sheaf, the stalk cohomology can be non-zero only in degree $0$. However, we may ``turn the triangle'' to obtain the distinguished triangle
$$
\Mdot[-1]\xrightarrow{S}\mathbf R_X^\bullet[1]\xrightarrow{T} \mathbf I_X^\bullet\xrightarrow{[1]}\Mdot[-1],
$$
where $\Mdot[-1]$ {\bf is} a perverse sheaf. Now distinguished triangles of perverse sheaves are the short exact sequences in  $\operatorname{Perv}(X)$. Therefore, in $\operatorname{Perv}(X)$, we have a short exact sequence
$$
0\rightarrow\Mdot[-1]\xrightarrow{S}\mathbf R_X^\bullet[1]\xrightarrow{T} \mathbf I_X^\bullet\rightarrow 0,
$$
and so $\ker T\cong \Mdot[-1]\neq 0$ even though the kernels of all of the induced maps on stalk cohomology are zero.

Furthermore, $T$ is a surjection, even though the induced maps on the stalk cohomology at $\0$ are not surjections in all degrees, and the map $S$ is an injection, even though $S$ induces the zero map on the stalk cohomology in all degrees.

\end{exm}

\bigskip

In this note, we wish to clarify what is going on here by looking at induced maps on stalk cohomology after applying the shifted vanishing cycle functor. We also show that, if the base ring is a field and $T$ is an endomorphism on a perverse sheaf, then there is a nice relationship between the kernels, images, and cokernels of the induced maps on stalk cohomology and the perverse kernel, image, and cokernel of $T$.

\bigskip

\section{Enter the vanishing cycles}

We want to analyze kernels, images, and cokernels in $\operatorname{Perv}(X)$ by looking at stalks  and homomorphisms of modules. As we shall see, we can do this if we first take vanishing cycles supported at isolated points.

\smallskip

\begin{exm}Let us look again at the map from \exref{exm:fund}, where we had $\mathbf R_X^\bullet[1]\xrightarrow{\ T\ } \mathbf I_X^\bullet$, where $X$ was the union of the $x$- and $y$-axes in $\C^2$. We wish to see that taking vanishing cycles can explain why $\ker T$ must not be zero.

Let $L$ be the restriction to $X$ of the linear function $\hat L(x,y)=x+y$. Then the shifted vanishing cycles $\phi_L[-1]$ along $L$ is an exact functor from $\operatorname{Perv}(X)$ to $\operatorname{Perv}(V(L))=\operatorname{Perv}(\{\0\})$, and a perverse sheaf on an isolated point consists, up to isomorphism, of a finitely-generated $R$-module in degree zero -- its stalk cohomology at the point --and zeroes in all other degrees.

Let $\Kdot:=\ker T$ and $\Cdot:=\coker\, T$, so that we have an exact sequence in  $\operatorname{Perv}(X)$:
$$
0\rightarrow \Kdot\rightarrow \mathbf R_X^\bullet[1]\xrightarrow{\ T\ } \mathbf I_X^\bullet\rightarrow\Cdot\rightarrow 0.
$$

Let $T_L:=\phi_L[-1](T)$ and let $T^0_{L,\0}$ denote the homomorphism induced by $T_L$ on the stalk cohomology at $\0$ in degree $0$.

As $\phi_L[-1]$ is an exact functor, we have an exact sequence of perverse sheaves
$$
0\rightarrow \phi_L[-1]\Kdot\rightarrow \phi_L[-1]\mathbf R_X^\bullet[1]\xrightarrow{T_L} \phi_L[-1]\mathbf I_X^\bullet\rightarrow\phi_L[-1]\Cdot\rightarrow 0,
$$
which are all supported at just the origin; so this exact sequence corresponds to the sequence of $R$-modules:
$$0\rightarrow H^0\left(\phi_L[-1]\Kdot\right)_\0\rightarrow H^0\left(\phi_L[-1]\mathbf R_X^\bullet[1]\right)_\0\xrightarrow{T^0_{L,\0}}H^0\left(\phi_L[-1]\mathbf I_X^\bullet\right)_\0\rightarrow H^0\left(\phi_L[-1]\Cdot\right)_\0\rightarrow 0.$$

\smallskip

Now, $H^0\left(\phi_L[-1]\mathbf R_X^\bullet[1]\right)_\0\cong \Z$, while $H^0\left(\phi_L[-1]\mathbf I_X^\bullet\right)_\0=0$. Consequently, $H^0\left(\phi_L[-1]\Kdot\right)_\0\cong \Z$, and so $\Kdot$ cannot be the zero complex.
\end{exm}

\medskip

We return to the case of a general analytic space $X$ and a morphism $\Pdot\xrightarrow{T}\Qdot$. We will use the vanishing cycles along various functions to analyze the situation. First we need two definitions.

\begin{defn} We define an ordering on (isomorphism classes of) Noetherian $R$-modules by: $M\leq N$ if and only if there exists an $R$-module $P$ such that $M\oplus P\cong N$.
\end{defn}

Note that reflexivity and transitivity are immediate. Anti-symmetry reduces to proving the weak cancellation property that $M\oplus P\cong M$ implies that $P=0$, provided that $M$ is Noetherian; this is an easy exercise.

\medskip

\begin{defn} Let $x\in X$, let $\U$ be an open neighborhood of $x$ in $X$, and let $f:(\U, x)\rightarrow (\C,0)$ such that $\dim_x\supp\phi_f[-1]\big(\Pdot_{|_\U}\big)\leq 0$ and $\dim_x\supp\phi_f[-1]\big(\Qdot_{|_\U}\big)\leq 0$.

Then we will say that $f$ is $(\Pdot, \Qdot)$-{\bf isolating} at $x$, and we let $T_{f, x}^0$ denote the map induced by $\phi_f[-1]\big(T_{|_\U}\big)$ from $H^0\big(\phi_f[-1](\Pdot_{|_\U})\big)_x$ to $H^0\big(\phi_f[-1](\Qdot_{|_\U})\big)_x$. As this depends only on the germ of $f$ at $x$, we will suppress the explicit restrictions to $\U$ below.

\end{defn}

The cases above where $\dim_x<0$ are meant to allow for the possibility that the supports are ``empty at $x$'', i.e., the cases where $x$ is not in the supports. Note that, for all $x\in X$, there exists an $f$ which is $(\Pdot, \Qdot)$-isolating at $x$ since we may select a common Whitney stratification $\strat$ of $X$ with respect to which both $\Pdot$ and $\Qdot$ are constructible, and then, if $S$ is the stratum containing $x$, take $f$ to have a nondegenerate critical point at $x$ with respect to $\strat$ in the sense of \cite{stratmorse}, 6.A.2.

\bigskip

Now we can state:

\begin{thm}\label{thm:vanthm}
Given a morphism $T:\Pdot\rightarrow\Qdot$, we have:

\begin{enumerate}
\item
$$
\supp(\ker T)=\overline{\{x\in X |  \textnormal{ there exists a $(\Pdot, \Qdot)$-isolating } f \textnormal{ at } x \textnormal{ such that } \ker T_{f, x}^0\neq 0\}}.
$$
\smallskip
\item Suppose that $Y$ is an irreducible component of $\supp(\ker T)$, and let $d:=\dim Y$. Then, for a generic point $x\in Y$, $H^*(\ker T)_x$ is zero, except in degree $-d$, where we have
$$
H^{-d}(\ker T)_x\cong M_{\operatorname{min},x},
$$
where $M_{\operatorname{min},x}$ is the minimum non-zero module which occurs as $\ker\big(T_{f, x}^0\big)$ for some $(\Pdot, \Qdot)$-isolating $f$ at $x$.
\end{enumerate}

\medskip

Furthermore, the above statements remain true if each instance of $\ker$ is replaced by $\coker$, or if each instance of $\ker$ is replaced by $\im$.
\end{thm}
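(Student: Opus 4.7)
My plan is to convert every statement about the perverse kernel, image, and cokernel into a statement about ordinary $R$-modules by exploiting the exactness of $\phi_f[-1]$ on $\operatorname{Perv}(X)$. First, I would fix a common Whitney stratification $\strat$ of $X$ with respect to which $\Pdot$, $\Qdot$, $\ker T$, $\im T$, and $\coker T$ are all constructible; such a $\strat$ exists because only finitely many perverse sheaves need to be simultaneously accommodated.

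The central observation is that, for any $(\Pdot,\Qdot)$-isolating $f$ at $x$, applying the exact functor $\phi_f[-1]$ to the four-term exact sequence $0\to\ker T\to\Pdot\xrightarrow{T}\Qdot\to\coker T\to 0$ produces perverse sheaves $\phi_f[-1](\ker T)$, $\phi_f[-1](\im T)$, and $\phi_f[-1](\coker T)$ that automatically have support of dimension $\leq 0$ at $x$ (as subquotients of $\phi_f[-1]\Pdot$ and $\phi_f[-1]\Qdot$). These are therefore perverse sheaves supported at isolated points near $x$, concentrated in degree $0$; taking $H^0(\cdot)_x$ of the resulting exact sequence of modules yields
\begin{gather*}
H^0(\phi_f[-1](\ker T))_x\cong\ker T^0_{f,x},\\
H^0(\phi_f[-1](\im T))_x\cong\im T^0_{f,x},\\
H^0(\phi_f[-1](\coker T))_x\cong\coker T^0_{f,x}.
\end{gather*}

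For Part (1), the inclusion ``$\supseteq$'' is immediate: the support is closed, and $\ker T^0_{f,x}\ne 0$ forces $\phi_f[-1](\ker T)$ to be nonzero at $x$. For the reverse inclusion I would prove density: given $x_0\in\supp(\ker T)$ and a neighborhood $U$, pick a top-dimensional stratum $S$ of $\supp(\ker T)$ of dimension $d$ with $x_0\in\overline{S}$, and a point $x\in S\cap U$. The Whitney conditions, together with the maximality of $d$, force $\supp(\ker T)$ to coincide with $S$ on a neighborhood of $x$, so the perverse support/cosupport conditions make $H^*(\ker T)_x$ concentrated in degree $-d$ with value the nonzero stalk $\mathcal{L}_x$ of a local system on $S$. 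Choose $f$ with a nondegenerate critical point at $x$ with respect to $\strat$ as in \cite{stratmorse}, 6.A.2; such an $f$ is automatically $(\Pdot,\Qdot)$-isolating, and a direct stratified-Morse computation yields $H^0(\phi_f[-1](\ker T))_x\cong\mathcal{L}_x\ne 0$.

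For Part (2), the same local description at a generic $x\in S$ gives $H^{-d}(\ker T)_x\cong\mathcal{L}_x$ and $H^i(\ker T)_x=0$ for $i\ne -d$; the Morse computation above then shows that this module does occur as some $\ker T^0_{f,x}$. The main obstacle is the minimality statement: for every other $(\Pdot,\Qdot)$-isolating $g$ at $x$ one must establish $\ker T^0_{g,x}\cong\mathcal{L}_x\oplus N$ for some $R$-module $N$, so that $\mathcal{L}_x\leq\ker T^0_{g,x}$ in the defined ordering. The plan is to deform $g$ into a Morse function on $S$ through a family of isolating functions, showing that the contribution from the conormal $\overline{T^*_S X}$ is a constant direct summand equal to $\mathcal{L}_x$ while contributions from conormals of lower-dimensional strata split off as additional direct summands. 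The verbatim arguments with $\ker$ replaced by $\im$ or $\coker$ then follow from the three identifications displayed above.
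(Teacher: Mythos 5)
Your overall route is the same as the paper's: exploit the exactness of $\phi_f[-1]$ on the four-term sequence $0\to\ker T\to\Pdot\xrightarrow{T}\Qdot\to\coker T\to0$ and on its factorization through $\im T$, note that the resulting perverse sheaves are point-supported (hence concentrated in degree $0$) near $x$, and read off the three identifications $\ker T^0_{f,x}\cong H^0(\phi_f[-1]\ker T)_x$, etc.; both inclusions in Item 1 and the ``achieved by a Morse function'' half of Item 2 then go through exactly as in the paper by working on a maximal stratum $S$ of the relevant support. (One small point: the image identification requires the two short exact sequences through $\im T$ separately --- a surjection followed by an injection --- not just the four-term sequence; this is precisely where the paper's image case differs from its kernel case.)

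The genuine gap is the minimality claim in Item 2, which you yourself flag as the main obstacle and then only sketch. The deformation-of-$g$/conormal-splitting strategy you propose is both stronger than what is needed and not justified as stated: over a general regular Noetherian base ring there is no a priori reason that ``contributions from conormals'' of the various strata split off as direct summands of $\ker T^0_{g,x}$, let alone that the $\overline{T^*_SX}$-contribution is constant through a family (and its multiplicity is the Milnor number of $g|_S$ at $x$, not $1$). The efficient argument applies the local structure to $\ker T$ itself rather than to $\Pdot$ and $\Qdot$: at a generic $x\in Y$ the sheaf $\ker T$ is, on a neighborhood, a local system $\call[d]$ extended by zero from the $d$-manifold $S$, and any $(\Pdot,\Qdot)$-isolating $g$ is automatically isolating for the subobject $\ker T$; hence $\ker T^0_{g,x}\cong H^0\big(\phi_g[-1](\call[d])\big)_x$, which, when non-zero, is $\call_x^{\mu}$ with $\mu\geq 1$ the Milnor number of the isolated critical point of $g|_S$ at $x$. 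Therefore $H^{-d}(\ker T)_x\cong\call_x$ is a direct summand of every non-zero $\ker T^0_{g,x}$, and it is realized when $g|_S$ is Morse. To be fair, the paper itself dispatches minimality with a one-line appeal to nondegeneracy of $f$; but to be complete your proposal should replace the deformation plan with this local-system computation.
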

\begin{proof} The proofs for kernels, images, and cokernels are very similar;  we prove the kernel and image statements, and leave the cokernel statements as exercises.

\smallskip

We define 
$$\operatorname{stalkker}_\phi:= \overline{\{x\in X |  \textnormal{ there exists a $(\Pdot, \Qdot)$-isolating } f \textnormal{ at } x \textnormal{ such that } \ker T_{f, x}^0\neq 0\}},$$  
and
$$\operatorname{stalkim}_\phi:= \overline{\{x\in X |  \textnormal{ there exists a $(\Pdot, \Qdot)$-isolating } f \textnormal{ at } x \textnormal{ such that } \im T_{f, x}^0\neq 0\}}.$$

\smallskip

Let $\Kdot:=\ker T$, $\Idot:=\im T$, and $\Cdot:=\coker\, T$, so that we have two short exact sequences of perverse sheaves
\begin{equation}\tag{$\dagger$}
0\rightarrow\Kdot\rightarrow \Pdot\xrightarrow{\alpha}\Idot\rightarrow 0\ \ \textnormal{ and }\ \ 0\rightarrow\Idot\xrightarrow{\beta} \Qdot\rightarrow\Cdot\rightarrow 0,
\end{equation}
where $T=\beta\circ\alpha$; of course, we also have the combined exact sequence
\begin{equation}\tag{$\ddagger$}
0\rightarrow\Kdot\rightarrow \Pdot\xrightarrow{T} \Qdot\rightarrow\Cdot\rightarrow 0.
\end{equation}

\bigskip

\noindent{\bf kernel statements}:

\smallskip

We shall first show that $\operatorname{stalkker}_\phi\subseteq \supp\Kdot$ by showing that $X-\supp(\Kdot)=X-\operatorname{stalkker}_\phi$.

\smallskip

Suppose that $p\in X-\supp\Kdot$, i.e., suppose that $p\in X$ possesses an open neighborhood $\U$ such that, for all $x\in\U$, $H^*(\Kdot)_x=0$, i.e., $\Kdot=0$ on $\U$. Restricting to $U$, we are reduced to the case where ($\ddagger$) becomes the short exact sequence
$$
0\rightarrow\Pdot\xrightarrow{T} \Qdot\rightarrow\Cdot\rightarrow 0.
$$
Suppose that $f$ is $(\Pdot, \Qdot)$-isolating at a point $x\in \U$. Then, as $\phi_f[-1]$ is an exact functor, we have a short exact sequence
$$
0\rightarrow H^0(\phi_f[-1]\Pdot)_x\xrightarrow{T^0_{f,x}} H^0(\phi_f[-1]\Qdot)_x\rightarrow H^0(\phi_f[-1]\Cdot)_x\rightarrow 0.
$$
Thus, $\ker\big(T_{f, x}^0\big)=0$; this shows that $p\in X-\operatorname{stalkker}_\phi$. Hence, we have proved that $$X-\supp(\Kdot)\subseteq  X-\operatorname{stalkker}_\phi,$$
i.e., that $\operatorname{stalkker}_\phi\subseteq \supp(\Kdot)$.

\medskip

We must show the reverse containment; in fact, we shall prove Item 2 of the theorem at the same time.

\medskip

Suppose that $p\in \supp(\Kdot)$. Let $\strat$ be a Whitney stratification, with connected strata, of $X$, with respect to which $\Pdot$, $\Qdot$, $\Kdot$, and $\Cdot$ are all constructible. Then $p$ must be in the closure of a maximal stratum $S$ (ordered in the standard way by inclusion in the closure) of $\supp(\Kdot)$. 

Let $x\in S$ and let $d:=\dim S$. Let $f$ be a function from an open neighborhood of $x$ in $X$ to $\C$ with a complex nondegenerate critical point at $x$ with respect to $\strat$ (in the sense of \cite{stratmorse}, 6.A.2). Then $f$ is $(\Pdot, \Qdot)$-isolating at $x$, and $H^i(\phi_f[-1]\Kdot)_x$ is zero if $i\neq 0$, while $H^0(\phi_f[-1]\Kdot)_x\cong H^{-d}(\Kdot)_x\neq 0$; note that $H^i(\phi_f[-1]\Kdot)_x$ is a minimum among such non-zero modules as $f$ has a nondegenerate critical point at $x$.

Now, we have the exact sequence
$$0\rightarrow H^0\big(\phi_f[-1]\Kdot\big)_x\rightarrow H^0\big(\phi_f[-1]\Pdot\big)_x\xrightarrow{T^0_{f, x}} H^0\big(\phi_f[-1]\Qdot\big)_x\rightarrow H^0\big(\phi_f[-1]\Cdot\big)_x\rightarrow 0.
$$
Therefore, $\ker T^0_{f,x}\cong H^0\big(\phi_f[-1]\Kdot\big)_x\neq 0$, and we have shown that $p\in \operatorname{stalkker}_\phi$, i.e., that $\supp(\Kdot)\subseteq\operatorname{stalkker}_\phi$.

\bigskip

\noindent{\bf image statements}:

\smallskip

We shall first show that $\operatorname{stalkim}_\phi\subseteq \supp\Idot$ by showing that $X-\supp(\Idot)=X-\operatorname{stalkim}_\phi$.

\smallskip

Suppose that $p\in X-\supp\Idot$, i.e., suppose that $p\in X$ possesses an open neighborhood $\U$ such that, for all $x\in\U$, $H^*(\Idot)_x=0$, i.e., $\Idot=0$ on $\U$. Restricting to $U$, we are reduced to the case where ($\ddagger$) becomes the exact sequence
$$
0\rightarrow\Kdot\rightarrow \Pdot\xrightarrow{0} \Qdot\rightarrow\Cdot\rightarrow 0.
$$
Suppose that $f$ is $(\Pdot, \Qdot)$-isolating at a point $x\in \U$. Then, it follows immediately that $T^0_{f,x}=0$, i.e., $\im\big(T_{f, x}^0\big)=0$. This shows that $p\in X-\operatorname{stalkim}_\phi$. Hence, we have proved that $$X-\supp(\Idot)\subseteq  X-\operatorname{stalkim}_\phi,$$
i.e., that $\operatorname{stalkim}_\phi\subseteq \supp(\Idot)$.

\medskip

We must show the reverse containment; in fact, we shall again prove Item 2 of the theorem at the same time.

\medskip

Suppose that $p\in \supp(\Idot)$. Let $\strat$ be a Whitney stratification, with connected strata, of $X$, with respect to which $\Pdot$, $\Qdot$, $\Kdot$, $\Idot$, and $\Cdot$ are all constructible. Then $p$ must be in the closure of a maximal stratum $S$ of $\supp(\Idot)$. 

Let $x\in S$ and let $d:=\dim S$. Let $f$ be a function from an open neighborhood of $x$ in $X$ to $\C$ with a complex nondegenerate critical point at $x$ with respect to $\strat$ (in the sense of \cite{stratmorse}, 6.A.2). Then $f$ is $(\Pdot, \Qdot)$-isolating at $x$, and $H^i(\phi_f[-1]\Idot)_x$ is zero if $i\neq 0$, while 
$$H^0(\phi_f[-1]\Idot)_x\cong H^{-d}(\Idot)_x\neq 0;$$
 note, as before, that $H^i(\phi_f[-1]\Idot)_x$ is a minimum among such non-zero modules as $f$ has a nondegenerate critical point at $x$.

Now, letting $\alpha^0_{f,x}$ and $\beta^0_{f,x}$ denote the maps induced on the stalk cohomology in degree $0$ at $x$ by the maps $\phi_f[-1](\alpha)$ and $\phi_f[-1](\beta)$, respectively, we have the short exact sequences
$$0\rightarrow H^0\big(\phi_f[-1]\Kdot\big)_x\rightarrow H^0\big(\phi_f[-1]\Pdot\big)_x\xrightarrow{\alpha^0_{f,x}}H^0\big(\phi_f[-1]\Idot\big)_x\rightarrow 0$$
and 
$$0\rightarrow H^0\big(\phi_f[-1]\Idot\big)_x\xrightarrow{\beta^0_{f,x}} H^0\big(\phi_f[-1]\Qdot\big)_x\rightarrow H^0\big(\phi_f[-1]\Cdot\big)_x\rightarrow 0,$$
where $T^0_{f,x}=\beta^0_{f,x}\circ\alpha^0_{f,x}$.

As $\alpha^0_{f,x}$ is a surjection and $\beta^0_{f,x}$ is an injection, we see that 
$$
\im T^0_{f,x}=\im (\beta^0_{f,x}\circ\alpha^0_{f,x})\cong \im (\beta^0_{f,x})\cong H^0\big(\phi_f[-1]\Idot\big)_x\cong H^{-d}(\Idot)_x\neq 0.
$$

Therefore, we have shown that $p\in \operatorname{stalkim}_\phi$, i.e., that $\supp(\Idot)\subseteq\operatorname{stalkim}_\phi$.
\end{proof}

\bigskip

\begin{cor} The morphism $T:\Pdot\rightarrow\Qdot$ is an injection (resp., zero morphism, surjection) if and only if, for all $x\in X$, for all $(\Pdot, \Qdot)$-isolating $f$ at $x$, $T^0_{f,x}$ is an injection (resp., zero morphism, surjection).
\end{cor}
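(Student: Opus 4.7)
The plan is to deduce the corollary directly from Item 1 of \thmref{thm:vanthm}, applied in turn to kernels, cokernels, and images. In the abelian category $\operatorname{Perv}(X)$, the morphism $T$ is an injection (resp.\ surjection, zero morphism) if and only if $\ker T$ (resp.\ $\coker T$, $\im T$) is zero as a perverse sheaf, and a perverse sheaf is zero if and only if its support is empty. So each of the three biconditionals to be proved reduces to a statement about when a certain support is empty.

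I would handle the injection case first. By \thmref{thm:vanthm}, Item 1,
$$\supp(\ker T) \ =\ \overline{\{x \in X : \text{ there exists a $(\Pdot, \Qdot)$-isolating $f$ at $x$ with $\ker T^0_{f,x} \neq 0$}\}}.$$
Since the closure of a subset of $X$ is empty precisely when the subset itself is empty, $\ker T = 0$ is equivalent to the assertion that no such pair $(x,f)$ exists, i.e., that $\ker T^0_{f,x} = 0$ for every $x \in X$ and every $(\Pdot,\Qdot)$-isolating $f$ at $x$; and this is exactly the statement that each such $T^0_{f,x}$ is injective.

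The surjection case is verbatim the same with kernel replaced by cokernel, invoking the $\coker$ variant of \thmref{thm:vanthm}. The zero-morphism case is the same with kernel replaced by image, invoking the $\im$ variant and using that $\im T^0_{f,x} = 0$ is equivalent to $T^0_{f,x} = 0$. Since the substantive content has already been established in \thmref{thm:vanthm}, I expect no real obstacle; the entire argument amounts to the bookkeeping above, namely translating ``empty closure'' into ``no such $(x,f)$'' and then into ``for all $x$ and all isolating $f$.''
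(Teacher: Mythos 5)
Your proposal is correct and is exactly the paper's argument: the paper also deduces the corollary immediately from Item 1 of \thmref{thm:vanthm} in the kernel, image, and cokernel cases, matching injection with kernel, zero morphism with image, and surjection with cokernel. Your write-up simply makes explicit the bookkeeping (vanishing of a perverse sheaf equals empty support, and an empty closure equals an empty set) that the paper leaves as ``immediate.''
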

\begin{proof} This is immediate from Item 1 of the theorem in the kernel, image, and cokernel cases.
\end{proof}

\bigskip

\section{The Special Case of an Endomorphism over a Field}

It may seem strange, but -- in the case of an endomorphism where the base ring is a field -- we do not need to apply vanishing cycles in order to obtain a result along the lines of \thmref{thm:vanthm}, though we must drop the conclusion about images.

\smallskip

We will use properties of characteristic cycles (see \cite{kashsch}, \cite{dimcasheaves}, \cite{schurbook}):

\begin{itemize}

\item Characteristic cycles $\operatorname{CC}(\Pdot)$ of complexes of sheaves are additive over distinguished triangles; in particular, characteristic cycles are additive over exact sequences in $\operatorname{Perv}(X)$. 

\smallskip

\item  For perverse sheaves $\Pdot$ with a field for a base ring, the subset $X$ which underlies $\operatorname{CC}(\Pdot)$ equals $\supp\Pdot$; in particular,  in $\operatorname{CC}(\Pdot)=0$, then $\Pdot=0$.

\end{itemize}

\medskip

\begin{thm}\label{thm:endothm} If the base ring is a field, and we have an endomorphism $T:\Pdot\rightarrow\Pdot$, then:
\begin{enumerate}
\item
$$
\supp(\ker T)=\overline{\{x\in X |   \ker T_x^*\neq 0\}}.
$$
\smallskip
\item Suppose that $Y$ is an irreducible component of $\supp(\ker T)$, and let $d:=\dim Y$. Then, for a generic point $x\in Y$, $H^*(\ker  T)_x$ is zero, except in degree $-d$, where we have
$$
H^{-d}(\ker T)_x\cong \ker \big(T^{-d}_x\big).
$$

\end{enumerate}

\medskip

Furthermore, the above statements remain true if each instance of $\ker$ is replaced by $\coker$.
\end{thm}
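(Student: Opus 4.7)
The plan is to exploit the endomorphism hypothesis together with characteristic cycles to reduce everything to a stalk-level computation, without ever invoking vanishing cycles. The crucial preliminary observation is that, because the source and target of $T$ are both $\Pdot$, applying the additivity of $\cc$ to the two short exact sequences
$$0 \to \Kdot \to \Pdot \xrightarrow{\alpha} \Idot \to 0 \quad \text{and} \quad 0 \to \Idot \xrightarrow{\beta} \Pdot \to \Cdot \to 0$$
yields $\cc(\Kdot) + \cc(\Idot) = \cc(\Pdot) = \cc(\Idot) + \cc(\Cdot)$, and hence $\cc(\Kdot) = \cc(\Cdot)$. Because the base ring is a field, the underlying set of the characteristic cycle of a perverse sheaf equals its support, so $\supp(\Kdot) = \supp(\Cdot)$. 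This exchange between kernel and cokernel is the heart of the theorem, and it is what makes the vanishing cycles of \thmref{thm:vanthm} dispensable in this setting.

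The easy half of (1) follows immediately: if $p \notin \supp(\Kdot) = \supp(\Cdot)$, then on some open neighborhood $\U$ of $p$ both $\Kdot$ and $\Cdot$ vanish, so the combined four-term exact sequence collapses to $0 \to \Pdot|_\U \xrightarrow{T|_\U} \Pdot|_\U \to 0$; thus $T|_\U$ is an isomorphism in the abelian category $\operatorname{Perv}(\U)$, and therefore every induced stalk map $T_y^i$ with $y \in \U$ is an isomorphism. This proves $\overline{\{x \in X : \ker T_x^* \neq 0\}} \subseteq \supp(\Kdot)$, and the symmetric argument handles the cokernel version.

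For the reverse inclusion in (1) and the identification in (2), fix a Whitney stratification $\strat$ with connected strata with respect to which $\Pdot, \Kdot, \Idot, \Cdot$ are all constructible, let $Y$ be an irreducible component of $\supp(\Kdot)$ with $d := \dim Y$, and take $x$ in the top-dimensional stratum of $Y$. By the standard fact that at a generic point of a maximal stratum of its support a perverse sheaf has stalk cohomology concentrated in degree $-d$, we have $H^i(\Kdot)_x = H^i(\Cdot)_x = 0$ for $i \neq -d$ (using $\supp(\Cdot) = \supp(\Kdot)$), with $H^{-d}(\Kdot)_x$ and $H^{-d}(\Cdot)_x$ both non-zero. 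Threading this concentration through the long exact sequences of stalk cohomology coming from the two short exact sequences, the cokernel sequence at degree $-d-1$ forces $\beta_x^{-d-1}$ to be an isomorphism, and then the kernel sequence at the same degree exhibits $\alpha_x^{-d-1}$ as an injection between two finite-dimensional $R$-vector spaces of equal dimension (here field coefficients are essential), hence also an isomorphism. Consequently the connecting map $H^{-d-1}(\Idot)_x \to H^{-d}(\Kdot)_x$ vanishes, $\alpha_x^{-d}$ is surjective with $\ker \alpha_x^{-d} \cong H^{-d}(\Kdot)_x$, and the cokernel sequence shows $\beta_x^{-d}$ is injective with $\coker \beta_x^{-d} \cong H^{-d}(\Cdot)_x$. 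Writing $T_x^{-d} = \beta_x^{-d} \circ \alpha_x^{-d}$ and using these facts then yields
$$\ker T_x^{-d} \;\cong\; H^{-d}(\Kdot)_x \neq 0 \qquad \text{and} \qquad \coker T_x^{-d} \;\cong\; H^{-d}(\Cdot)_x \neq 0,$$
which simultaneously supplies the missing containment in (1), proves the isomorphism of (2), and handles the kernel and cokernel versions uniformly.

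The main obstacle is the bookkeeping at degree $-d-1$: showing that $\alpha_x^{-d-1}$ is an isomorphism is the one place where the endomorphism hypothesis $\Pdot = \Qdot$ really bites, since it is only then that the isomorphism $\beta_x^{-d-1} \colon H^{-d-1}(\Idot)_x \xrightarrow{\sim} H^{-d-1}(\Pdot)_x$ from the cokernel sequence can feed a dimension count back into the kernel sequence. Once that step is in place, the rest of the argument is a routine diagram chase.
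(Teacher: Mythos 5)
Your proposal is correct and follows essentially the same route as the paper's proof: characteristic-cycle additivity plus the support property give $\supp(\ker T)=\supp(\coker T)$, the easy inclusion follows from $T$ being an isomorphism off that set, and the reverse inclusion and Item 2 come from the same degree-$(-d-1)$ dimension count forcing $\alpha_x^{-d-1}$ to be an isomorphism and hence splitting off the short exact sequence in degree $-d$. The only cosmetic difference is that you establish $\supp(\ker T)=\supp(\coker T)$ up front and use it for the easy direction, whereas the paper applies CC-additivity locally there; the arguments are interchangeable.
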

 \begin{proof} We shall prove the statements about kernels; the cokernel proof is completely analogous. Let $\Kdot:=\ker T$ and $\Cdot:=\coker\, T$, and $\Idot=\im T$.
 
 \smallskip
 
 Suppose that $p\in X-\supp(\Kdot)$. Then there exists an open neighborhood $\U$ of $p$ such that the restriction of $\Kdot$ to $\U$ is zero. Thus, restricting to $\U$, we have a short exact sequence in $\operatorname{Perv}(X)$:
 $$
 0\rightarrow \Pdot\xrightarrow{T}\Pdot\rightarrow\Cdot\rightarrow 0.
 $$
 But now the properties of characteristic cycles -- the additivity and support properties -- imply instantly that $\Cdot=0$. Therefore, $T$ is an isomorphism, i.e., induces isomorphisms on the stalk cohomology at every point. And so, for all $x\in \U$, for all degrees $i$, $\ker T_x^i=0$. Consequently, $p\in X-\overline{\{x\in X |   \ker T_x^*\neq 0\}}$. Thus, we have shown that $\overline{\{x\in X |   \ker T_x^*\neq 0\}}\subseteq \supp(\Kdot)$.
 
 \medskip
 
We must show the reverse containment; in fact, we shall again prove Item 2 of the theorem at the same time.

\medskip

We have the canonical exact sequence
$$
0\rightarrow\Kdot\rightarrow\Pdot\xrightarrow{T}\Pdot\rightarrow\Cdot\rightarrow 0.
$$
Once again, using properties of characteristic cycles, we see that $\operatorname{CC}(\Kdot)=\operatorname{CC}(\Cdot)$, and so $\supp\Kdot=\supp\Cdot$.

Suppose that $p\in \supp(\Kdot)$. Let $\strat$ be a Whitney stratification, with connected strata, of $X$, with respect to which $\Pdot$, $\Kdot$, and $\Cdot$ are all constructible. Then $p$ must be in the closure of a maximal stratum $S$ of  $\supp(\Kdot)=\supp(\Cdot)$.

Let $x\in S$ and let $d:=\dim S$. Then, $H^*(\Kdot)_x$ and $H^*(\Cdot)_x$ are both non-zero precisely in degree $-d$.

From the short exact sequence
$$
0\rightarrow\Idot\xrightarrow{\beta}\Pdot\rightarrow\Cdot\rightarrow 0,
$$
and using that $H^{-d-2}(\Cdot)_x=H^{-d-1}(\Cdot)_x=0$, we conclude that:

\begin{itemize}
\item $H^{-d-1}(\Idot)_x\cong H^{-d-1}(\Pdot)_x$, and 
\smallskip
\item $\beta^{-d}_x$ from $H^{-d}(\Idot)_x$ to $H^{-d}(\Pdot)_x$ is an injection.
\end{itemize}

From the short exact sequence
$$
0\rightarrow\Kdot\rightarrow\Pdot\xrightarrow{\alpha}\Idot\rightarrow 0,
$$
and using that $H^{-d-1}(\Kdot)_x=H^{-d+1}(\Kdot)_x=0$, we conclude that we have an exact sequence
$$
0\rightarrow H^{-d-1}(\Pdot)_x\rightarrow H^{-d-1}(\Idot)_x\rightarrow  H^{-d}(\Kdot)_x\rightarrow H^{-d}(\Pdot)_x\rightarrow H^{-d}(\Idot)_x\rightarrow 0.
$$
However, as we saw above, $H^{-d-1}(\Idot)_x$ and $H^{-d-1}(\Pdot)_x$ are isomorphic finite-dimensional vector spaces and, consequently, the injection on the left of the above exact sequence is an isomorphism. 

Therefore, we have a short exact sequence
$$
0\rightarrow  H^{-d}(\Kdot)_x\rightarrow H^{-d}(\Pdot)_x\xrightarrow{\alpha^{-d}_x} H^{-d}(\Idot)_x\rightarrow 0.
$$
As $\beta^{-d}_x$ is an injection, we conclude that
$$
\ker T^{-d}_x=\ker (\beta^{-d}_x\circ\alpha^{-d}_x)=\ker \alpha^{-d}_x\cong H^{-d}(\Kdot)_x\neq 0.
$$
This concludes the proof.
\end{proof}

\medskip

\begin{rem} Our primary interest in \thmref{thm:endothm} centers around eigenvalues of the monodromy for the nearby and vanishing cycles.

\smallskip

Suppose that our base ring is $\C$. Let $\Qdot$ by a perverse sheaf on a complex analytic space $Y$ and suppose that we have a complex analytic $f:Y\rightarrow\C$. Then, we have the perverse sheaves $\psi_f[-1]\Qdot$ and $\phi_f[-1]\Qdot$ on $X:=V(f)$, together with their respective monodromy automorphisms, $T_f$ and $\widetilde T_f$.

For each $\lambda\in\C$, there is the question: what does it mean for $\lambda$ to be an eigenvalue of $T_f$ (resp., $\widetilde T_f$)? Does it mean that $\ker(\lambda\cdot\operatorname{id}-T_f)\neq 0$ (resp., $\ker(\lambda\cdot\operatorname{id}-\widetilde T_f)\neq 0$) or does it mean that there is a point $x\in X$ and a degree $i$ such that, in the stalk cohomology, we have $\ker(\lambda\cdot\operatorname{id}-T^i_{f,x})\neq 0$ (resp., $\ker(\lambda\cdot\operatorname{id}-\widetilde T^i_{f,x})\neq 0$)?

\medskip

Item 1 of \thmref{thm:endothm} tells us that these conditions are equivalent.
\end{rem}

\medskip

\begin{exm}
The statement about images that one might expect to find in \thmref{thm:endothm} is simply false. 

Let $R$ be a field and consider the injection $\Mdot[-1]\xrightarrow{S}\mathbf R_X^\bullet[1]$ from \exref{exm:fund}. Define an endomorphism 
$$
\Mdot[-1]\oplus\mathbf R_X^\bullet[1]\xrightarrow{T}\Mdot[-1]\oplus\mathbf R_X^\bullet[1]
$$
by $T(a,b)=(0, S(a))$. Then the image of $T$ is isomorphic to $\Mdot[-1]$, while all of the induced maps on stalk cohomology are zero.

\smallskip

It is interesting to note that $\ker T\not\cong \coker\, T$, for $\ker T\cong \mathbf R_X^\bullet[1]$ and $\coker\, T\cong \Mdot[-1]\oplus \mathbf I_X^\bullet$, where $\mathbf I_X^\bullet$ is as in \exref{exm:fund}. And yet, 
$$\operatorname{CC}\big(\mathbf R_X^\bullet[1]\big)=\big[T^*_\0\C^2\big]+\big[T^*_{V(y)}\C^2\big]+\big[T^*_{V(x)}\C^2\big] = \operatorname{CC}\big(\Mdot[-1]\oplus \mathbf I_X^\bullet\big)
$$
(or, depending on one's shifting convention on the characteristic cycle, the characteristic cycle may be the negation of what we give).
\end{exm}

\bigskip

\printbibliography
\end{document}